\documentclass{amsart}
\pdfoutput=1
\setcounter{tocdepth}{1}
	\usepackage{amsmath} 
	\usepackage{amssymb} 
	\usepackage{mathtools} 
	\usepackage[T1]{fontenc} 
	\usepackage[allcolors=black]{hyperref} 
	\usepackage{bm} 

	\usepackage{graphicx} 
	\usepackage{tikz-cd} 
	\usepackage[noadjust]{cite} 
	\usepackage{todonotes} 
	\usepackage{setspace} 

	\usepackage{amsthm} 
	
	\theoremstyle{plain}

		
		\newtheorem{thm}{Theorem}[section]	
		\newtheorem{lem}[thm]{Lemma}
		\newtheorem{prop}[thm]{Proposition}
		
		\newtheorem{cor}[thm]{Corollary}
	
	\theoremstyle{definition} 
	
		\newtheorem{defn}[thm]{Definition}
		\newtheorem{eg}[thm]{Example}

		\newtheorem{rem}[thm]{Remark}



		\newcommand{\C}{\mathbb C}


		\DeclareMathOperator{\id}{id}



\title[The weak containment problem for \'etale groupoids]{The weak containment problem for \'etale groupoids which are strongly amenable at infinity}

\author{Julian Kranz}
\address{Julian Kranz\\
Westf\"alische Wilhelms-Universit\"at M\"unster,
Mathematisches Institut\\
Einsteinstr.~62, 48149 M\"unster, Germany}
\email{julian.kranz@uni-muenster.de}
\urladdr{https://www.uni-muenster.de/IVV5WS/WebHop/user/j\_kran05/}

\makeatletter
\@namedef{subjclassname@2020}{%
  \textup{2020} Mathematics Subject Classification}
\makeatother

\begin{document}
\subjclass[2020]{22A22, 46L55}
	\keywords{Groupoids, $C^*$-algebras, weak containment, amenability}
\maketitle
\begin{abstract}
	We show that an \'etale groupoid which is strongly amenable at infinity is amenable whenever its full and reduced $C^*$-algebras coincide.
\end{abstract}


\section{Introduction}

Let $\mathcal G$ be a locally compact Hausdorff groupoid with a Haar system. We say that $\mathcal G$ \emph{has the weak containment property}, if its full and reduced $C^*$-algebras are isomorphic via the regular representation $\Lambda:C^*\mathcal G\to C^*_r \mathcal G$. It is a classical result  \cite[Proposition 6.1.8]{anantharaman2001amenable} that $\mathcal G$ has the weak containment property whenever $\mathcal G$ is amenable (we will not distinguish between topological and measurewise amenability since they are equivalent for \'etale groupoids \cite[Remark 3.3.9]{anantharaman2001amenable}
). The converse is not true as shown by Willett \cite{willett2015non}. His counterexample is an \'etale groupoid which is not inner exact in the sense of \cite[Definition 3.7]{anantharaman2016some}. However for an exact discrete group $G$ acting on a compact Hausdorff space $X$, Matsumura \cite{matsumura2014characterization} showed that amenability of the transformation groupoid $X\rtimes G$ does follow from the weak containment property. This result was recently generalized to actions of locally compact exact groups on locally compact Hausdorff spaces by Buss, Echterhoff and Willett \cite{buss2020amenability} and to partial actions of exact discrete groups on locally compact spaces by Buss, Ferraro and Sehnem \cite{buss2020nuclearity}. In \cite{anantharaman2016some}, Anantharaman-Delaroche asked whether under some exactness hypothesis, amenability of a groupoid \emph{does} follow from the weak containment property. In this paper, we give a partial answer to her question. Following \cite[Definition 4.1, Proposition 4.8]{anantharaman2016exact}, we call a groupoid $\mathcal G$ \emph{strongly amenable at infinity}, if it acts amenably on its fiberwise Stone-\u{C}ech compactification $\beta_r \mathcal G$. For \'etale groupoids satisfying some mild assumption, this condition is equivalent to a number of other exactness conditions like exactness of the reduced $C^*$-algebra \cite[Theorem 8.6]{anantharaman2016exact}. We emphasize that all groupoids considered in this paper are assumed to be Hausdorff. Our main theorem is the following:

\begin{thm}\label{mainthm}
	Let $\mathcal G$ be an \'etale groupoid which is strongly amenable at infinity. If $C^*\mathcal G=C^*_r \mathcal G$ via the regular representation, then $C^*_r \mathcal G$ is nuclear. 
\end{thm}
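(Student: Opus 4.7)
The strategy is to adapt Matsumura's approach \cite{matsumura2014characterization}, as developed by Buss-Echterhoff-Willett \cite{buss2020amenability}, to the étale groupoid setting: embed $C^*_r\mathcal G$ into a naturally nuclear ``enveloping'' crossed product and then transport completely positive approximations back.

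Strong amenability at infinity means the action $\mathcal G\curvearrowright\beta_r\mathcal G$ is amenable, so the transformation groupoid $\beta_r\mathcal G\rtimes\mathcal G$ is an amenable étale groupoid. Therefore $C_0(\beta_r\mathcal G)\rtimes_r\mathcal G$ is nuclear and coincides with its full analogue. Moreover, the anchor map $\beta_r\mathcal G\to\mathcal G^{(0)}$ has compact fibres (Stone-\v{C}ech compactifications), so pullback of functions induces a nondegenerate $\mathcal G$-equivariant embedding $C_0(\mathcal G^{(0)})\hookrightarrow M(C_0(\beta_r\mathcal G))$; this functorially yields a $*$-homomorphism $\Phi:C^*\mathcal G\to M(C_0(\beta_r\mathcal G)\rtimes_r\mathcal G)$, faithful on the reduced algebra via a regular-representation computation. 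Weak containment $C^*\mathcal G=C^*_r\mathcal G$ then promotes $\Phi$ to a faithful $*$-homomorphism out of $C^*_r\mathcal G$.

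The crux is transporting the completely positive approximation property. Amenability of $\mathcal G\curvearrowright\beta_r\mathcal G$ provides a net $(\xi_i)$ of compactly supported positive-type functions on the arrow space of $\beta_r\mathcal G\rtimes\mathcal G$ converging to $1$ uniformly on compacta; their Schur multipliers factor through finite-dimensional algebras and approximate the identity on $C_0(\beta_r\mathcal G)\rtimes_r\mathcal G$. Restricting each $\xi_i$ along the natural inclusion $\mathcal G\hookrightarrow\beta_r\mathcal G\rtimes\mathcal G$ (induced by the unit section $\mathcal G^{(0)}\hookrightarrow\beta_r\mathcal G$) gives positive-type functions on $\mathcal G$ whose associated Schur multipliers approximate the identity on $C^*\mathcal G$. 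Weak containment is precisely what ensures this approximation is valid in the reduced norm, yielding the cpap for $C^*_r\mathcal G$ and hence its nuclearity.

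The main obstacle will be justifying that the restricted positive-type functions have sufficient compactness properties. A priori, the inclusion $\mathcal G^{(0)}\hookrightarrow\beta_r\mathcal G$ need not be closed, and the resulting Schur multipliers must honestly factor through finite-dimensional algebras on $C^*_r\mathcal G$. Overcoming this likely requires a truncation or localization using the $C_0(\mathcal G^{(0)})$-module structure together with the étale structure on $\mathcal G$, and this step---interleaved with the descent through weak containment---is where I expect the bulk of the work to lie.
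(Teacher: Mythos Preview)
Your outline has a genuine gap, and the obstacle you flag at the end is not a technicality but the entire content of the theorem.

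First, note that the embedding $C^*_r\mathcal G\hookrightarrow C_0(\beta_r\mathcal G)\rtimes_r\mathcal G$ already exists without any weak containment hypothesis: reduced crossed products are functorial for equivariant injections, and $C_0(X)\hookrightarrow C_0(\beta_r\mathcal G)$ is such a map. So your first paragraph does not use the hypothesis $C^*\mathcal G=C^*_r\mathcal G$ at all. This is a warning sign, because strong amenability at infinity alone certainly does not force nuclearity of $C^*_r\mathcal G$: already a non-amenable exact discrete group $G$, viewed as a groupoid with one unit, acts amenably on $\beta G$ while $C^*_r G$ is non-nuclear.

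Second, the ``natural inclusion $\mathcal G\hookrightarrow\beta_r\mathcal G\rtimes\mathcal G$ induced by the unit section $\mathcal G^{(0)}\hookrightarrow\beta_r\mathcal G$'' is not a groupoid homomorphism. A functor $\mathcal G\to\beta_r\mathcal G\rtimes\mathcal G$ covering the identity on $\mathcal G$ would require a $\mathcal G$-\emph{equivariant} continuous section of the anchor map $\beta_r\mathcal G\to X$, and no such section exists in general (again, for a non-amenable exact group there is no $G$-fixed point in $\beta G$). Consequently, restricting a positive-type function along this map need not produce a positive-type function on $\mathcal G$, and even if it did, possessing a net of compactly supported positive-type functions on $\mathcal G$ tending to $1$ is essentially the definition of amenability of $\mathcal G$ --- so your scheme would prove the conclusion without ever invoking weak containment, which is impossible by the example above. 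The ``truncation or localization'' you anticipate cannot repair this: what is missing is not compactness but a mechanism that genuinely consumes the hypothesis $C^*\mathcal G=C^*_r\mathcal G$.

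The paper's argument is organized quite differently. It does not try to pull approximations back along a groupoid map. Instead it builds, via Arveson's extension theorem, a completely positive contractive ``return map'' $C_0(\beta_r\mathcal G)\rtimes_r\mathcal G\to (C^*_r\mathcal G)^{**}$ which, composed with the inclusion $C^*_r\mathcal G\hookrightarrow C_0(\beta_r\mathcal G)\rtimes_r\mathcal G$, recovers the canonical embedding into the double dual; factoring through a nuclear algebra then gives the result. Weak containment enters precisely here: one represents the partial action of the inverse semigroup $S$ of open bisections on $C_0(X)^{**}$ covariantly on its Haagerup standard form, obtaining a representation of $C_0(X)\rtimes S\cong C^*\mathcal G$; only because $C^*\mathcal G=C^*_r\mathcal G$ does this become a representation of $C^*_r\mathcal G$ that can then be Arveson-extended from the subalgebra $C^*_r\mathcal G\subseteq C_0(\beta_r\mathcal G)\rtimes_r\mathcal G$. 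The passage to inverse semigroups is what allows the action on $C_0(X)$ to be extended to $C_0(X)^{**}$ (a groupoid action would require non-degeneracy that typically fails), and a multiplicative-domain argument shows the extended map restricts to an $S$-equivariant c.c.p.\ map $C_0(\beta_r\mathcal G)\to C_0(X)^{**}$.
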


In this case, $\mathcal G$ is amenable by \cite[Corollary 6.2.14, Theorem 3.3.7]{anantharaman2001amenable}. \\

The proof of our main theorem follows the same idea as \cite{matsumura2014characterization}. The goal is to factor the inclusion of $C^*_r\mathcal G$ into its double dual through the nuclear $C^*$-algebra $C^*_r(\beta_r\mathcal G\rtimes\mathcal G)$. If we want to imitate the construction of \cite{matsumura2014characterization}, we need to extend the action of $\mathcal G$ on its unit space $X$ to the double dual $C_0(X)^{**}$. This might not be possible since the canonical inclusion $C_0(X)\hookrightarrow C_0(X)^{**}$ is usually degenerate. But since we only consider \'etale groupoids, we can reformulate the problem in terms of partial actions of inverse semigroups. We show that a partial action of an inverse semigroup on a $C^*$-algebra naturally extends to a partial action on the double dual. In particular, we get a partial action on $C_0(X)^{**}$ by the inverse semigroup of open bisections of $\mathcal G$. We then show that the double dual of a partial action is covariantly represented on its Haagerup standard form \cite{haagerup1976standard}. For partial \emph{group} actions, this has already been done in \cite{buss2020nuclearity}. With the Haagerup standard form at hand, we run essentially the same proof as in \cite{matsumura2014characterization} to produce a completely positive contractive map
	\[C^*_r(\beta_r \mathcal G\rtimes \mathcal G)\to (C^*_r(\mathcal G))^{**}\]
which extends the inclusion on $C^*_r(\mathcal G)$.\\

%

The paper is organized as follows:
In Section \ref{sec-etale} we fix some notation concerning groupoid actions on $C^*$-algebras. In Section \ref{sec-inverse} we translate Section \ref{sec-etale} to the context of inverse semigroups. The enveloping von Neumann algebra of a partial action and its Haagerup standard form are introduced in Section \ref{sec-haagerup}. In the last section, we prove our main theorem.

\subsection*{Notation}
The fiber product of two maps $f:X\to Z,g:Y\to Z$ is denoted by $X\times_{f,Z,g}Y:=\{(x,y)\in X\times Y, f(x)=g(y)\}$. If the maps $f$ and $g$ are clear from the context, we omit one or both of them from the notation.

\section{\'Etale groupoids}\label{sec-etale}

	A \emph{groupoid} $\mathcal G$ is a small category in which every morphism is invertible. We denote the set of all morphisms again by $\mathcal G$ and the set of objects by $\mathcal G^{(0)}$, considered as a subspace of $\mathcal G$ via the identity morphisms. $\mathcal G^{(0)}$ is also called the \emph{unit space}. The range and source maps are denoted by $r,s:\mathcal G\to \mathcal G^{(0)}$. For $x\in \mathcal G^{(0)}$, we write $\mathcal G^x:=r^{-1}(x)$ and $\mathcal G_x:=s^{-1}(x)$. A \emph{topological groupoid} is a groupoid $\mathcal G$ together with a topology on $\mathcal G$ such that the range and source maps $\mathcal G\to \mathcal G^{(0)}$, the inverse map $\mathcal G\to \mathcal G$ and the composition map $\mathcal G\times_{s,\mathcal G^{(0)},r}\mathcal G\to \mathcal G$ are continuous. A topological groupoid $\mathcal G$ is called \emph{\'etale}, if it is locally compact Hausdorff and if the range and source maps are local homeomorphisms. In this case, the unit space is clopen in $\mathcal G$ and the fibers $\mathcal G^x\subseteq \mathcal G, x\in \mathcal G^{(0)}$ are discrete. In this article, we only consider \'etale groupoids. We refer to \cite{sims2020groupoids} for an introduction to \'etale groupoids. \\
	
We now introduce actions of \'etale groupoids on $C^*$-algebras. A \emph{$C_0(X)$-algebra} is a $C^*$-algebra $A$ together with a non-degenerate $*$-homomorphism $C_0(X)\to ZM(A)$ into the center of the multiplier algebra of $A$. Equivalently, $A$ is the section algebra of an upper semicontinuous $C^*$-bundle over $X$ (see \cite[Appendix C]{williams2007crossed} for an account on this perspective). The fiber of this bundle at a point $x\in X$ is given by $A_x:=A/(C_0(X\setminus\{x\})A)$. Note that any $C_0(X)$-linear $*$-homomorphism $\phi:A\to B$ of $C_0(X)$-algebras canonically induces $*$-homomorphisms $\pi_x:A_x\to B_x$ on the fibers. We denote by $A\otimes_{C_0(X)}B$ the quotient of the minimal tensor product $A\otimes B$ by the closed two-sided ideal generated by elements of the form $fa\otimes b-a\otimes fb,a\in A,b\in B,f\in C_0(X)$. 

Let $\mathcal G$ be an \'etale groupoid with unit space $X=\mathcal G^{(0)}$. By pulling back along the maps $r,s:\mathcal G\to X$, we equip $C_0(\mathcal G)$ with the structure of a $C_0(X)$-algebra. The algebras 
	\[r^*A:=C_0(\mathcal G)\otimes_{r,C_0(X)}A,\quad s^*A:=C_0(\mathcal G)\otimes_{s,C_0(X)}A\]
are $C_0(\mathcal G)$-algebras where the subscript $r$ or $s$ in the tensor product indicates the $C_0(X)$-structure on $C_0(\mathcal G)$ that we are considering. 

\begin{defn}[\cite{le1999theorie}]
	 A \emph{$\mathcal G$-$C^*$-algebra} $(A,\alpha)$ is a $C_0(X)$-algebra $A$ together with a $C_0(\mathcal G)$-linear $*$-isomorphism $\alpha:s^*A\to r^*A$ such that for all $(g,h)\in \mathcal G\times_{s,X,r}\mathcal G$ we have 
		\[\alpha_{gh}=\alpha_g\circ \alpha_h:A_{s(h)}\to A_{r(g)}.\]
	A $C_0(X)$-linear $*$-homomorphism $\phi:A\to B$ between $\mathcal G$-$C^*$-algebras $(A,\alpha)$ and $(B,\beta)$ is called \emph{equivariant}, if the following diagram commutes:
		\[\begin{tikzcd}
			s^*A\arrow[r,"\id\otimes \phi"]\arrow[d,"\alpha"]	&s^*B\arrow[d,"\beta"]\\
			r^*A\arrow[r,"\id\otimes \phi"]			&r^*B
		\end{tikzcd}\]
\end{defn}

\begin{eg}
	We can reformulate the definition of a commutative $\mathcal G$-$C^*$-algebra $A=C_0(Y)$ in terms of Gelfand duals as follows: The non-degenerate $*$-homomorphism $C_0(X)\to ZM(C_0(Y))= C_b(Y)$ corresponds to a continuous map $p:Y\to X$ which is also called the \emph{anchor map}. The $*$-isomorphism $\alpha:s^*C_0(Y)\to r^*C_0(Y)$ corresponds to a continuous map $\alpha':\mathcal G\times_{s,X,p}Y\to \mathcal G\times_{r,X,p}Y$ which commutes with the projection onto $\mathcal G$ and which satisfies 
		\[\alpha'_{gh}=\alpha'_g\circ \alpha'_h:p^{-1}(s(h))\to p^{-1}(r(g))\]
	for all $(g,h)\in \mathcal G\times_{s,X,r}\mathcal G$. Here $\alpha'_g$ denotes the restriction of $\alpha'$ to the preimage of $g$ under the projection of $\mathcal G\times_{s,X,p}Y$ resp. $\mathcal G\times_{r,X,p}Y$ onto $\mathcal G$. We also say that $(Y,p,\alpha')$ is a \emph{$\mathcal G$-space}.
\end{eg}

\begin{eg}
	The unit space $X$ of $\mathcal G$ itself is a $\mathcal G$-space where the action $\mathcal G\times_{s,X}X\to \mathcal G\times_{r,X}X$ is given by $(g,s(g))\mapsto (g,r(g))$.
\end{eg}

\begin{defn}[\cite{anantharaman2016exact,anantharaman2014fibrewise}]
	The \emph{fiberwise Stone-\u{C}ech compactification of $\mathcal G$} is the Gelfand dual $\beta_r\mathcal G$ of the commutative $C^*$-algebra of all continuous bounded functions $f:\mathcal G\to \C$ such that for every $\varepsilon>0$ there exists a compact subset $C\subseteq X$ satisfying $|f(g)|<\varepsilon$ for all $g\notin r^{-1}(C)$. We define a $\mathcal G$-action on $C_0(\beta_r \mathcal G)$ by taking the canonical inclusion
		\[\iota:C_0(X)\hookrightarrow C_0(\beta_r\mathcal G),\quad \iota(f)(g):= f(r(g)),\quad f\in C_0(X),g\in \mathcal G\]
	for the $C_0(X)$-structure and by defining the action 
		\[\alpha:C_0(\mathcal G)\otimes_{s,C_0(X),r}C_0(\beta_r\mathcal G)\to C_0(\mathcal G)\otimes_{r,C_0(X),r}C_0(\beta_r\mathcal G)\]
	via the formular $\alpha(f\otimes f')(g,h):=f(g)f'(g^{-1}h)$. Here we identify the codomain of $\alpha$ with certain functions on $\mathcal G\times_{r,X,r}\mathcal G$. 
\end{defn}

Note that the inclusion $\iota:C_0(X)\hookrightarrow C_0(\beta_r \mathcal G)$ is $\mathcal G$-equivariant.
Despite of its name, the fiberwise Stone-\u{C}ech compactification $\beta_r \mathcal G$ is in general \emph{not} compact and its fibers might not agree with the Stone-\u{C}ech compactifications of the range fibers of $\mathcal G$. However in the case that $\mathcal G$ is a group (or more generally if $X$ is compact), $\beta_r \mathcal G$ agrees with the usual Stone-\u{C}ech compactification of $\mathcal G$. 

\begin{defn}
	Let $(A,\alpha)$ be a $\mathcal G$-$C^*$-algebra. Denote by $C_c(\mathcal G,A):= C_c(\mathcal G)r^*A$ the space of compactly supported continuous sections of the upper semicontinuous $C^*$-bundle $\mathcal G\times_{r,X}A$. We define a multiplication and involution on $C_c(\mathcal G,A)$ by 
		\[f*f'(g):=\sum_{h\in \mathcal G^{r(g)}} f(h)\alpha_h(f'(h^{-1}g)),\quad f^*(g):=\alpha_{g}(f(g^{-1})^*)\]
	for $f,f'\in C_c(\mathcal G,A)$ and $g\in \mathcal G$. 
\end{defn}

The \emph{full crossed product} $A\rtimes \mathcal G$ is by definition the enveloping $C^*$-algebra of $C_c(\mathcal G,A)$ (c.f. \cite[Proposition 3.2]{quigg1999c}). To define the reduced crossed product, fix $x\in X$ and consider the Hilbert-$A_x$-module $\ell^2(\mathcal G_x,A_x)$. We define a representation
	\[\Lambda_x:C_c(\mathcal G,A)\to \mathcal L(\ell^2(\mathcal G_x,A_x)),\quad \Lambda_x(f)\xi(h):=\sum_{g\in \mathcal G^{r(h)}}\alpha_{h^{-1}} (f(g))\xi(g^{-1}h).\]
	The \emph{reduced crossed product} $A\rtimes_r \mathcal G$ is defined as the completion of $C_c(\mathcal G,A)$ by the norm 
		\[\|f\|_r:=\sup_{x\in X}\|\Lambda_x(f)\|,\quad f\in C_c(\mathcal G,A).\]
	We get a canonical quotient map $\Lambda:A\rtimes \mathcal G\to A\rtimes_r \mathcal G$. In the case $A=C_0(X)$, we simply write $C^*\mathcal G:=C_0(X)\rtimes \mathcal G$ and $C^*_r\mathcal G:=C_0(X)\rtimes_r \mathcal G$. 


\section{Inverse semigroups}\label{sec-inverse}

\begin{defn}
	An \emph{inverse semigroup} is a semigroup $S$ such that for every $s\in S$, there exists a unique element $s^*\in S$ such that $ss^*s=s$ and $s^*=s^*ss^*$. 	
\end{defn}

A unit in $S$ is an element $1\in S$ such that $s=1s=s1$ for all $s\in S$. In this article, all inverse semigroups are assumed to have a unit. Of course groups are examples of inverse semigroups. Our main example is the following.

\begin{eg}
	Let $\mathcal G$ be a topological groupoid. A subset $U\subseteq \mathcal G$ is called a \emph{bisection}, if the restrictions of the range and source maps to $U$ are homeomorphisms onto their images. If $U$ and $V$ are bisections, their product 
		\[UV:=\{gh:g\in U,h\in V,r(h)=s(g)\}\]
	is again a bisection. Also the inverse $U^*:=U^{-1}$ of a bisection is again a bisection. With these operations, the collection of all open bisections of $\mathcal G$ becomes an inverse semigroup. It has a unit given by the whole unit space $X$.
\end{eg}

Note that a locally compact Hausdorff groupoid is \'etale if and only if its topology has a basis consisting of open bisections.

\begin{defn}
	Let $S$ be an inverse semigroup with unit $1\in S$ and let $X$ be a set. A partial action $\theta=((X_s)_{s\in S},(\theta_s)_{s\in S})$ of $S$ on $X$ consists of a collection $(X_s)_{s\in S}$ of subsets of $X$ together with bijections 
		\[\theta_s:X_{s^*}\to X_s,\quad s\in S\]
	satisfying
		\begin{enumerate}
			\item $X_1=X$ and $\theta_1=\id$.
			\item For every $s,t\in S$, we have $\theta_{s^*}(X_s\cap X_{t^*})\subseteq X_{(ts)^*}$.
			\item $\theta_{ts}$ extends $\theta_t\theta_s$ on $\theta_{s^*}(X_s\cap X_{t^*})$.
		\end{enumerate}
\end{defn}

\begin{defn}
	Let $S$ be an inverse semigroup with partial actions $\theta$ and $\omega$ on sets $X$ and $Y$. A map $f:X\to Y$ is called \emph{equivariant} if we have $f(X_s)\subseteq Y_s$ and $\omega_s \circ f= f\circ \theta_s$ on $X_{s^*}$ for every $s\in S$. 
\end{defn}

Depending on the additional structure that $X$ carries, we require some extra conditions on $\theta$: If $X$ is a $C^*$-algebra, all the $X_s$ are required to be closed two-sided ideals and the $\theta_s$ are required to be $*$-isomorphisms. If $X$ is a von Neumann algebra, the $X_s$ are required to be ultraweakly closed two-sided ideals and the $\theta_s$ are required to be $*$-isomorphisms. If $X$ is a topological space, the $X_s$ are required to be open subsets and the $\theta_s$ are required to be homeomorphisms. If $X$ is a Hilbert space, the $X_s$ are required to be closed linear subspaces and the $\theta_s$ are required to be isometries. 

Note that partial actions on commutative $C^*$-algebras and locally compact spaces can be identified with one another via Gelfand duality. We will also call a partial action of $S$ on a Hilbert space $H$ a \emph{partial representation} and identify it with a map  $S\to \mathcal B(H)$ acting by partial isometries. 

\begin{eg}\label{groupoids-give-partial-actions}
	Let $\mathcal G$ be an \'etale groupoid with unit space $X$ and let $(A,\alpha)$ be a $\mathcal G$-$C^*$-algebra. Let $S$ be the inverse semigroup of open bisections of $\mathcal G$. There is a partial action $\theta=((A_U)_{U\in S},(\theta_U)_{U\in S})$ of $S$ defined as follows: For an open bisection $U\subseteq \mathcal G$, define $A_U$ to be the ideal $C_0(r(U))A\subseteq A$. Note that we can identify $A_U$ with $C_0(U)r^*A$ and $A_{U^*}$ with $C_0(U)s^*A$. Under this identification, we define $\theta_U$ to be the restriction
		\[\theta_U:=\alpha|_U:C_0(U)s^*A\xrightarrow{\simeq}C_0(U)r^*A.\]		
	of $\alpha$ to $U$. In particular, there is a canonical partial action of $S$ on $A=C_0(X)$. In this case, we have $A_U=C_0(r(U))$ and $A_{U^*}=C_0(s(U))$ and $\theta_U$ is induced by the canonical homeomorphism $s(U)\xrightarrow{s|_U^{-1}}U\xrightarrow{r}r(U)$. 
\end{eg}
The next two definitions are due to \cite{sieben1997c}.
\begin{defn}\label{covariantrep}
	Let $\theta$ be a partial action of an inverse semigroup $S$ on a $C^*$-algebra $A$. A \emph{covariant representation} of $(A,S,\theta)$ on a Hilbert space $H$ is given by a pair $(\pi,v)$ where $\pi:A\to \mathcal B(H)$ is a $*$-homomorphism and $v:S\to \mathcal B(H)$ is a partial representation such that 
		\begin{enumerate}
			\item $\pi(A_s)H=v_{ss^*}H$ for every $s\in S$.
			\item For every $s\in S$ and $a\in A_{s^*}$, we have 
				\[\pi(\theta_s(a))= v_s\pi(a)v_{s^*}.\]
		\end{enumerate}
\end{defn}

\begin{defn}
	Let $\theta$ be a partial action of an inverse semigroup $S$ on a $C^*$-algebra $A$. Let $C_c(S,A)$ be the set of all finite formal linear combinations $\sum_{s\in S}a_su_s$ where $a_s\in A_s$. We define a product and involution on $C_c(S,A)$ by linear extension of the formulas
		\[(au_s)(bu_t):= \theta_s(\theta_{s^*}(a)b)u_{st},\quad (au_s)^*:=\theta_{s^*}(a^*)u_{s^*},\quad s,t\in S,a\in A_s,b\in A_t.\]
	Let $(\pi,v)$ be a covariant representation of $(A,S,\theta)$ on $H$. The \emph{integrated form} of $(\pi,v)$ is the representation
		\[\pi\rtimes v: C_c(S,A)\to \mathcal B(H),\quad \pi\rtimes v(au_s):=\pi(a)v_s.\]
	Denote by $A\rtimes S$ the Hausdorff completion of $C_c(S,A)$ by the seminorm
		\[\|f\|:=\sup \|\pi\rtimes v(f)\|,\quad f\in C_c(S,A)\]
	where the supremum runs over all covariant representations $(\pi,v)$ of $(A,S,\theta)$. 
\end{defn}

\begin{rem}\label{inclusion-into-crossed-product}
	Since we assumed $S$ to have a unit $1\in S$, we can consider $A$ as a subalgebra of $A\rtimes S$ via the inclusion $a\mapsto a u_1$. For $a\in A_s$, we can even identify $a u_1$ with $a u_{ss^*}$ since the integrated forms of all covariant representations agree on these two elements. 
\end{rem}

As in the group case, we have 
\begin{prop}[{\cite[Proposition 4.8]{sieben1997c}}]\label{covariantrepresentationprop}
	Every non-degenerate representation of $A\rtimes S$ is the integrated form of a covariant representation.
\end{prop}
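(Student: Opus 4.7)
The plan is to extract the covariant representation $(\pi,v)$ directly from $\rho$. Set $\pi := \rho \circ \iota$ where $\iota : A \hookrightarrow A\rtimes S$, $a\mapsto au_1$ is the embedding of Remark~\ref{inclusion-into-crossed-product}. The defining equation $(\pi\rtimes v)(au_s)=\pi(a)v_s$ forces the identity $\pi(a)v_s = \rho(au_s)$ for $a\in A_s$, so the task is to build partial isometries $v_s\in\mathcal B(H)$ realising it. I would first verify that $\pi$ is non-degenerate: for $a\in A_s$ and any approximate unit $(f_\lambda)$ of $A$, one has $(f_\lambda u_1)(au_s) = f_\lambda au_s \to au_s$ in $A\rtimes S$, so applying $\rho$ gives $\pi(f_\lambda)\rho(au_s)\xi\to\rho(au_s)\xi$, hence $\rho(A\rtimes S)H\subseteq\overline{\pi(A)H}$. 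Non-degeneracy of $\rho$ then forces $\pi$ to be non-degenerate.

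The heart of the proof is the construction of $v_s$ for each $s\in S$ as the strong-operator limit of $\rho(e_\lambda u_s)$, taken along any approximate unit $(e_\lambda)$ of the ideal $A_s$. Using the product formula in $C_c(S,A)$ one computes $(e_\lambda u_s)^*(e_\mu u_s) = \theta_{s^*}(e_\lambda e_\mu)u_{s^*s}$, and by Remark~\ref{inclusion-into-crossed-product} applied to $s^*$ this element is identified in $A\rtimes S$ with $\theta_{s^*}(e_\lambda e_\mu)u_1$. Consequently,
\[\langle\rho(e_\lambda u_s)\xi,\rho(e_\mu u_s)\xi\rangle = \langle\pi(\theta_{s^*}(e_\lambda e_\mu))\xi,\xi\rangle.\]
Since $\theta_{s^*}:A_s\to A_{s^*}$ is a $*$-isomorphism, the net $\theta_{s^*}(e_\lambda e_\mu)$ is eventually an approximate unit for $A_{s^*}$, and non-degeneracy of $\pi$ forces $\pi(\theta_{s^*}(e_\lambda e_\mu))$ to converge strongly to the projection $P_{s^*}$ onto $\overline{\pi(A_{s^*})H}$. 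A standard polarisation argument then shows that $\rho(e_\lambda u_s)\xi$ is Cauchy for every $\xi$, that the limit is independent of the approximate unit, and that the resulting operator $v_s$ is a partial isometry with initial projection $P_{s^*}$. The identity $\pi(a)v_s = \rho(au_s)$ for $a\in A_s$ then drops out of $(au_1)(e_\lambda u_s)=ae_\lambda u_s\to au_s$.

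The remaining verifications are algebraic bookkeeping. The symmetric computation with $(e_\mu u_s)(e_\lambda u_s)^*$ identifies $v_sv_s^*$ as the projection onto $\overline{\pi(A_s)H}$, supplying the condition $\pi(A_s)H=v_{ss^*}H$ of Definition~\ref{covariantrep}. Applying $\rho$ to analogous products in $C_c(S,A)$ and passing to the limit along approximate units verifies that $s\mapsto v_s$ is a partial representation and that $\pi(\theta_s(a))=v_s\pi(a)v_{s^*}$ holds for $a\in A_{s^*}$. By construction $\pi\rtimes v$ agrees with $\rho$ on $C_c(S,A)$, hence on $A\rtimes S$. The main obstacle is the construction step itself: the formal multiplier $u_s$ does not lie in $A\rtimes S$ unless $A_s$ is unital, so one cannot simply set $v_s := \rho(u_s)$. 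The strong-operator approximation via $\rho(e_\lambda u_s)$ and the Cauchy argument based on the product $(e_\lambda u_s)^*(e_\mu u_s)$ are the only analytic inputs; the rest is a formal translation of the product rule in $C_c(S,A)$ into operator identities on $H$.
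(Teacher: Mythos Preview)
The paper does not give its own proof of this proposition; it is stated with a citation to Sieben \cite[Proposition 4.8]{sieben1997c} and used as a black box. Your argument is the standard one and is essentially Sieben's: recover $\pi$ by restricting $\rho$ to the copy of $A$ inside $A\rtimes S$, and manufacture each $v_s$ as the strong limit of $\rho(e_\lambda u_s)$ along an approximate unit of $A_s$, using the identity $(e_\lambda u_s)^*(e_\mu u_s)=\theta_{s^*}(e_\lambda e_\mu)u_{s^*s}$ together with Remark~\ref{inclusion-into-crossed-product} to control the inner products. This is correct in outline; the only places that deserve a little more care are (i) the claim that $\pi(\theta_{s^*}(e_\lambda e_\mu))$ converges strongly to the projection onto $\overline{\pi(A_{s^*})H}$, which should be phrased as convergence of the doubly indexed net rather than saying the products ``are eventually an approximate unit'', and (ii) the passage from $v_sv_s^*=P_s$ to the condition $\overline{\pi(A_s)H}=v_{ss^*}H$, which requires first checking $v_sv_{s^*}=v_{ss^*}$ (this follows once the partial-representation identities are established, but as written you invoke it before having verified them). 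Neither point is a genuine gap.
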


The following theorem allows us to translate Theorem \ref{mainthm} to the inverse semigroup setting.

\begin{thm}[{\cite[Theorem 7.2]{quigg1999c}}]
	Let $\mathcal G$ be an \'etale groupoid and $A$ a $\mathcal G$-$C^*$-algebra. Equip $A$ with the canonical partial action of the inverse semigroup $S$ of open bisections of $\mathcal G$  as in Example \ref{groupoids-give-partial-actions}. Then there is a canonical isomorphism
		\[A\rtimes \mathcal G\cong A\rtimes S.\]
\end{thm}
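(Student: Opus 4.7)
The strategy is to exhibit a natural $*$-algebra homomorphism $\Phi \colon C_c(S,A) \to C_c(\mathcal G, A)$, densely defined on both sides, and then to show that it identifies the two $C^*$-completions by matching their universal properties. An element $a u_U$ of $C_c(S,A)$ with $a \in A_U = C_0(r(U)) A$ compactly supported should be sent to the section of $r^*A$ supported on the open bisection $U$ whose value at $g \in U$ is the fiber element $a_{r(g)} \in A_{r(g)}$, using the identification $A_U \cong C_0(U) \cdot r^*A$ given by the homeomorphism $r|_U$. Linearity extends this to $\Phi$, and surjectivity onto $C_c(\mathcal G, A)$ follows because every $f \in C_c(\mathcal G, A)$ has compact support which, since $\mathcal G$ is \'etale, is covered by finitely many open bisections; a partition-of-unity argument then decomposes $f$ as a finite sum of sections of the above form.

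The key algebraic step is that $\Phi$ respects the product and involution. The point is that for open bisections $U$, $V$ and $g \in UV$ there is a unique factorization $g = hk$ with $h \in U$, $k \in V$, so the convolution sum collapses to the single term $a_{r(g)} \alpha_h(b_{r(k)})$, which matches $\Phi(\theta_U(\theta_{U^*}(a) b) u_{UV})(g)$ after unwinding the identifications via $r|_U$ and $s|_U$; compatibility with the involution is similar and shorter.

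To conclude, I match representations. Given a non-degenerate representation $\rho \colon A \rtimes \mathcal G \to \mathcal B(H)$, set $\pi := \rho|_A$ and, for each open bisection $U$, define $v_U \in \mathcal B(H)$ as the strong operator limit $\lim_\lambda \rho(\Phi(e_\lambda u_U))$ along an approximate unit $(e_\lambda)$ of $A_U$. One verifies that $v_U$ exists as a partial isometry independent of the choice of approximate unit, that the pair $(\pi, v)$ satisfies the covariance conditions of Definition \ref{covariantrep}, and that its integrated form agrees with $\rho \circ \Phi$. Conversely, every covariant representation $(\pi, v)$ of $(A, S, \theta)$ has integrated form factoring through $\Phi$, because the covariance relations force $\pi \rtimes v$ to vanish on the kernel of $\Phi$ (whose generators include the identifications of Remark \ref{inclusion-into-crossed-product}). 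Consequently the universal $C^*$-norms on $C_c(S, A)$ and $C_c(\mathcal G, A)$ agree, and $\Phi$ descends to the claimed isomorphism $A \rtimes \mathcal G \cong A \rtimes S$.

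The hard step is the construction of the $v_U$ from $\rho$ for open bisections $U$ without compact closure, and the verification of the covariance identity $\pi(\theta_U(a)) = v_U \pi(a) v_{U^*}$ for $a \in A_{U^*}$. Existence of $v_U$ reduces to the fact that the contractive net $(\rho(\Phi(e_\lambda u_U)))_\lambda$ is strong operator Cauchy, which follows from the identity $(e u_U)^* (e' u_U) = \theta_{U^*}(e^* e') u_{s(U)}$ at the $C_c(S, A)$ level together with the strong convergence of $\pi(\theta_{U^*}(e_\lambda^* e_\lambda))$ to the projection onto $\overline{\pi(A_{U^*}) H}$. The covariance identity is then a direct computation using that $\Phi(\theta_U(a) u_1) = \Phi(e u_U) \cdot \Phi(a u_1) \cdot \Phi(e u_U)^*$ in $A \rtimes \mathcal G$ for sufficiently large $e \in A_U$ acting as a unit on $\theta_U(a)$.
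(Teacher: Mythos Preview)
The paper does not provide its own proof of this theorem; it is quoted from \cite[Theorem~7.2]{quigg1999c} as a black box. Your sketch follows essentially the strategy of that reference: construct a $*$-algebra surjection $\Phi$ from (the compactly supported part of) $C_c(S,A)$ onto $C_c(\mathcal G,A)$, and then match the universal $C^*$-completions by setting up a bijection between non-degenerate representations of $A\rtimes\mathcal G$ and covariant representations of $(A,S,\theta)$.

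The outline is sound, but one step you pass over too quickly is the claim that every integrated form $\pi\rtimes v$ annihilates $\ker\Phi$. That kernel is generated by differences $a u_U - a u_V$ with $U\subseteq V$ open bisections and $a$ supported in $A_U$, not merely by the identifications of Remark~\ref{inclusion-into-crossed-product} (which treat only idempotents against the unit bisection $X$). Verifying $\pi(a)v_U=\pi(a)v_V$ in general requires writing $U=Ve$ with $e=s(U)$, using the partial-action axiom to obtain $v_U=v_V v_e$ as operators on $H$, and then observing that $\pi(a)$ annihilates $v_V(H_e^\perp\cap H_{V^*})\subseteq H_U^\perp$ because $a\in A_U$. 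This is routine once set up, but it is precisely where the compatibility between the natural partial order on $S$ and the ideal structure of $A$ enters, so it should not be left implicit.
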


\section{The Haagerup standard form}\label{sec-haagerup}

As in \cite{matsumura2014characterization,buss2020amenability,buss2020nuclearity}, a key ingredient for Theorem \ref{mainthm} is Haagerup's standard form of von Neumann algebras. Recall that a cone $P\subseteq H$ in a Hilbert space $H$ is called \emph{self-dual}, if it coincides with its dual $P^\circ:=\{\xi\in H : \langle \xi,\eta\rangle \geq 0 \quad \forall \eta\in P\}$.

\begin{thm}[{\cite[Theorem 1.6]{haagerup1976standard}}]
Let $M$ be a von Neumann algebra. Then there is a Hilbert space $H$, an embedding $M\subseteq \mathcal B(H)$, a conjugate linear isometric involution $J:H\to H$ and a self-dual cone $P\subseteq H$ such that the following properties are satisfied:
	\begin{enumerate}
		\item $JMJ=M'$.
		\item $JcJ=c^*$ for all $c\in Z(M)$ .
		\item $J\xi =\xi$ for all $\xi \in P$.
		\item $aJaJ(P)\subseteq P$ for all $a\in M$.
	\end{enumerate}
\end{thm}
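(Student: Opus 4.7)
The plan is to reduce to a situation where Tomita--Takesaki modular theory applies and then read off the four properties from the modular data. Concretely, I would first pick a faithful normal semifinite weight $\varphi$ on $M$, which exists for every von Neumann algebra. To simplify the exposition, I would begin with the $\sigma$-finite case, where $\varphi$ can be taken to be a faithful normal state with GNS triple $(H_\varphi, \pi_\varphi, \Omega_\varphi)$, and $\Omega_\varphi$ is both cyclic and separating for $\pi_\varphi(M)$. From now on I identify $M$ with $\pi_\varphi(M) \subseteq \mathcal{B}(H_\varphi)$ and set $H := H_\varphi$.

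Next, I would set up the modular objects. The map $S_0 \colon a\Omega_\varphi \mapsto a^*\Omega_\varphi$ defined on $M\Omega_\varphi$ is a densely defined conjugate-linear operator; Tomita's key technical step is to show that $S_0$ is preclosed. Let $S$ be its closure and write the polar decomposition $S = J\Delta^{1/2}$, where $J$ is a conjugate-linear isometric involution and $\Delta$ is a positive self-adjoint operator. The heart of the argument is Tomita's theorem, giving $JMJ = M'$ and $\Delta^{it} M \Delta^{-it} = M$ for all $t \in \R$; this would be the main obstacle, since it requires a careful analysis of analytic continuations of the modular automorphism group and commutator computations on vectors in the domain of $\Delta^{1/2}$. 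Property~(1) of the statement is then exactly Tomita's theorem.

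With $J$ and $\Delta$ in hand, I would define the self-dual cone as
\[ P := \overline{\{\Delta^{1/4} a \Omega_\varphi : a \in M_+\}} = \overline{\{aJaJ\Omega_\varphi : a \in M\}}, \]
where the equality of the two descriptions follows from the identity $\Delta^{1/4} a \Omega_\varphi = aJaJ\Omega_\varphi$ for positive $a$ (using $Ja^*J = Ja^*\Delta^{-1/2}\Delta^{1/2}\Omega_\varphi$-type manipulations). Self-duality of $P$ follows from a direct computation showing $P$ equals its own dual cone in $H$. Property~(4) is then immediate from the second description, since $M$ acts on the generators by $b \cdot aJaJ\Omega_\varphi = (ba)J(ba)J\Omega_\varphi$ up to a limit. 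Property~(3) reduces to $J\Delta^{1/4}a\Omega_\varphi = \Delta^{1/4}a\Omega_\varphi$ for $a \in M_+$, which follows because $JaJ = a$ and $J\Delta^{1/4} = \Delta^{-1/4}J$ together with $J\Omega_\varphi = \Omega_\varphi$. For property~(2), given $c \in Z(M)$, one uses $JcJ \in Z(M') \cap JMJ = Z(M)$ combined with the identity $\sigma^\varphi_t(c) = c$ for central elements, forcing $JcJ = c^*$.

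Finally, for a non-$\sigma$-finite $M$, I would pass to a faithful normal semifinite weight $\varphi$ and use the associated left Hilbert algebra $\mathfrak{A}_\varphi = \{x \in M : \varphi(x^*x) < \infty\}$, on which the same closability argument produces $S$, $J$, $\Delta$, and the cone $P$; the four properties follow by the same computations on the dense subspace $\mathfrak{A}_\varphi \Omega_\varphi \subseteq H$. As mentioned, the main difficulty is Tomita's theorem itself; the remaining verifications are essentially formal manipulations with the modular data, and self-duality of $P$ is the only other point requiring nontrivial Hilbert-space geometry.
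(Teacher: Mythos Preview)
The paper does not prove this theorem; it is quoted verbatim from Haagerup's original article and used as a black box (together with the companion uniqueness theorem) to build the covariant representation on the standard form in the subsequent lemma. There is therefore no proof in the paper to compare your proposal against.

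For what it is worth, your outline is the standard Tomita--Takesaki route to the standard form (GNS for a faithful normal state or semifinite weight, polar decomposition $S=J\Delta^{1/2}$, natural cone $P=\overline{\Delta^{1/4}M_+\Omega}$), and this is essentially how Haagerup himself proceeds. Two small slips worth flagging: in your verification of~(3) the claim ``$JaJ=a$ for $a\in M_+$'' is false in general (it would force $a$ to be central); the correct computation is $J\Delta^{1/4}a\Omega=\Delta^{-1/4}Ja\Omega=\Delta^{-1/4}\Delta^{1/2}a\Omega=\Delta^{1/4}a\Omega$, using that $Sa\Omega=a\Omega$ for $a\ge 0$ gives $Ja\Omega=\Delta^{1/2}a\Omega$. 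Likewise, the pointwise identity $\Delta^{1/4}a\Omega=aJaJ\Omega$ for positive $a$ does not hold; only the closures of the two generating sets coincide. These are cosmetic issues in an otherwise correct plan.
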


The quadruple $(M,H,J,P)$ is called a \emph{standard form}. It is unique in the following sense:

{\begin{thm}[{\cite[Theorem 2.3]{haagerup1976standard}}]\label{uniqueness}
	Let $(M_i,H_i,J_i,P_i)$ be standard forms for $i=1,2$ and let $\phi:M_1\to M_2$ be a $*$-isomorphism. Then there is a unique unitary $U:H_1\to H_2$ such that 
	\begin{enumerate}
		\item $\phi(x)=UxU^*$ for all $x\in M_1$.
		\item $J_2U=UJ_1$.
		\item $P_2=U(P_1)$.
	\end{enumerate}
\end{thm}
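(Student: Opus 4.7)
My plan is to follow the approach of Haagerup's original argument, using the canonical bijection between the self-dual cone $P_i$ and the positive predual $(M_i)_*^+$. A companion result of Haagerup in the same paper states that every normal positive functional $\omega$ on $M_i$ admits a unique implementing vector $\xi_i(\omega) \in P_i$ with $\omega(x) = \langle x\xi_i(\omega), \xi_i(\omega)\rangle$ for all $x \in M_i$. Since $\phi$ induces a bijection $(M_1)_*^+ \to (M_2)_*^+$ via $\omega \mapsto \omega \circ \phi^{-1}$, we obtain a natural candidate map $U_0: P_1 \to P_2$ by $U_0(\xi_1(\omega)) := \xi_2(\omega \circ \phi^{-1})$. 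The uniqueness part of the theorem is already visible here: any $U$ satisfying (1) and (3) must send $\xi_1(\omega) \in P_1$ to some vector of $P_2$ that implements $\omega \circ \phi^{-1}$ via the $M_2$-action, and by the uniqueness of implementing vectors in the cone, $U|_{P_1}$ is forced to equal $U_0$.

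To extend $U_0$ to a unitary I would use the following structural facts about standard forms: the real-linear span of $P_i$ is dense in the $J_i$-fixed real subspace $H_i^{J_i} := \{\xi \in H_i : J_i \xi = \xi\}$, and $H_i = H_i^{J_i} \oplus i H_i^{J_i}$ as a real-orthogonal direct sum. A direct computation expressing the inner product $\langle \xi, \eta\rangle$ for $\xi, \eta \in P_i$ purely in terms of the functional data of $\omega_\xi$ and $\omega_\eta$ (again extracted from Haagerup's paper) shows that $U_0$ preserves these inner products, so it extends to a real-linear isometry $H_1^{J_1} \to H_2^{J_2}$ and then complex-linearly to a unitary $U: H_1 \to H_2$. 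Property (3) holds by construction, and (2) follows because the decomposition $H_i = H_i^{J_i} \oplus i H_i^{J_i}$ intrinsically determines $J_i$ and $U$ sends each summand to the corresponding one.

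The main obstacle is verifying (1), namely $\phi(x) = U x U^*$ for every $x \in M_1$. Here the key is to exploit the defining property $aJ_1 a J_1(P_1) \subseteq P_1$. For a self-adjoint $a \in M_1$ and $\xi \in P_1$, the vector $\eta := aJ_1 a J_1 \xi$ lies in $P_1$ and implements a normal positive functional on $M_1$ that is determined by $a$ together with $\omega_\xi$ in a way which transforms naturally under $\phi$, since $\phi(a) J_2 \phi(a) J_2$ plays the analogous role on the $M_2$ side. Combining this naturality with the already-established relation $J_2 U = U J_1$, one deduces the identity $U(aJ_1 a J_1 \xi) = \phi(a) J_2 \phi(a) J_2 \, U\xi$ on $P_1$, and extracting the $M$-component (using $J_i \xi = \xi$ on the cone) yields $U a \xi = \phi(a) U\xi$. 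Polarizing in $a$ and using density of $P_1 - P_1$ in $H_1^{J_1}$ (together with complex-linear extension) promotes this to $U x U^* = \phi(x)$ on all of $H_1$ for every $x \in M_1$, establishing (1). Uniqueness has already been observed in the first paragraph.
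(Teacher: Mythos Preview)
The paper does not contain a proof of this statement: it is quoted as \cite[Theorem~2.3]{haagerup1976standard} and used as a black box (in the proof of Lemma~\ref{covarianthaagerup}). There is therefore no ``paper's own proof'' to compare your attempt against; any comparison has to be with Haagerup's original.

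Your sketch does follow the architecture of Haagerup's argument: the bijection $P_i \leftrightarrow (M_i)_*^+$ via unique implementing vectors, the extension of $U_0$ through the real subspace $H_i^{J_i}$ and then complex-linearly to all of $H_i$, and the observation that uniqueness is already forced by conditions (1) and (3). The one place where your outline has a genuine gap is the verification of~(1). From
\[
U\bigl(aJ_1 a J_1 \xi\bigr) \;=\; \phi(a)\, J_2\, \phi(a)\, J_2 \, U\xi,\qquad \xi\in P_1,
\]
one cannot simply ``extract the $M$-component'' to conclude $U a\xi = \phi(a)U\xi$. Using $J_i\xi=\xi$ on the cone only reduces this to $U\bigl(aJ_1 a\,\xi\bigr)=\phi(a)J_2\phi(a)\,U\xi$; the operators $J_1 a J_1\in M_1'$ and $J_2\phi(a)J_2\in M_2'$ do not act trivially, so the $M$-part and the $M'$-part remain entangled, and polarizing in $a$ does not decouple them. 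Haagerup's actual route to (1) is different: one shows that $x\mapsto U^*\phi(x)U$ lands in $M_1$ (because $U$ intertwines the $J_i$ and hence conjugates $M_2'=J_2M_2J_2$ onto $M_1'=J_1M_1J_1$, so also $M_2$ onto $M_1$ by taking commutants), and that this automorphism of $M_1$ fixes every vector state coming from $P_1$, hence every normal state, hence is the identity. Your polarization step should be replaced by an argument of this kind.
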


The following Lemma is the inverse semigroup analogue of \cite[Proposition 3.4]{buss2020nuclearity}.

\begin{lem}\label{covarianthaagerup}
	Let $\theta$ be a partial action of an inverse semigroup $S$ on a von Neumann algebra $M$. Let $(M,H,J,P)$ be its standard form and denote by $\iota:M\hookrightarrow \mathcal B(H)$ the inclusion. Then there is a canonical partial representation $v:S\to \mathcal B(H)$ such that $(\iota, v)$ is a covariant representation.

	\begin{proof}
		For $s\in S$ denote by $p_s\in M$ the central projection such that $M_s= p_s M$. It follows from Lemma 2.6 of \cite{haagerup1976standard} that $(p_sM, p_s H, p_sJp_s,p_s(P))$ again is a standard form. Theorem \ref{uniqueness} applied to $\theta_s:p_{s^*}M\to p_s M$ provides us with a unique isometry $v_s:p_{s^*}H\to p_sH$ satisfying
		\begin{enumerate}
			\item $\theta_s(a)=v_sav_{s^*}$ 
			\item $p_s J v_s = v_s J p_{s^*}$ 
			\item $p_s(P)=v_s(p_{s^*}(P))$ 
		\end{enumerate}
		for all $s\in S$ and $a\in M_{s^*}$. Another application of Theorem \ref{uniqueness} shows that the map $s\mapsto v_s$ defines a partial representation of $S$ on $H$. It immediately follows from the above properties that $(\iota,v)$ is covariant.
	\end{proof}

\end{lem}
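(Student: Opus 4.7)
My plan mirrors the group case treated in \cite{buss2020nuclearity} and uses Theorem \ref{uniqueness} as the main tool. First, I would observe that since $M_s$ is an ultraweakly closed two-sided ideal of the von Neumann algebra $M$, there is a unique central projection $p_s\in Z(M)$ with $M_s=p_sM$, and consequently $\theta_s$ becomes a $*$-isomorphism of von Neumann algebras $p_{s^*}M\xrightarrow{\simeq} p_sM$. Next, I would record the standard compression fact (Lemma 2.6 of \cite{haagerup1976standard}): for any central projection $p\in Z(M)$, the quadruple $(pM, pH, J|_{pH}, p(P))$ is a standard form for $pM$, using that $J$ commutes with every central projection.

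With this in place, the unitary $v_s:p_{s^*}H\to p_sH$ is produced by applying Theorem \ref{uniqueness} to the isomorphism $\theta_s$ between the two compressed standard forms; the three listed properties of $v_s$ are exactly the content of that theorem. I would then extend $v_s$ by zero on $(p_{s^*}H)^\perp$ to obtain a partial isometry on $H$ with initial projection $p_{s^*}$ and final projection $p_s$. Covariance of $(\iota,v)$ is then essentially free: condition (ii) of Definition \ref{covariantrep} is property (1) above, and condition (i) follows once we know that $v_sv_s^*=p_s$ is the support projection of $M_s$.

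The main obstacle will be verifying that $s\mapsto v_s$ is a partial representation, i.e.\ that $v_{st}$ and the composition $v_sv_t$ agree on the subspace where both are defined. The strategy is a second application of the uniqueness clause of Theorem \ref{uniqueness}: on the relevant corner, both operators implement the $*$-isomorphism $\theta_s\circ\theta_t$, which by axiom (iii) of a partial action coincides with $\theta_{st}$ on the corresponding ideal; and both operators intertwine the modular conjugations and the positive cones, which is obtained by composing properties (2) and (3) for $v_s$ and $v_t$. The bookkeeping lies in correctly identifying the corner: axiom (ii) of a partial action translates, after passing through the identifications $M_s=p_sM$, into the statement that $v_{t^*}$ maps the subspace $p_tH\cap p_{s^*}H$ into $p_{(st)^*}H$, which is what lets the uniqueness argument close and yields the partial representation identity.
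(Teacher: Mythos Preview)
Your proposal is correct and follows essentially the same approach as the paper: pass to central projections, use Haagerup's Lemma 2.6 to get compressed standard forms, apply Theorem \ref{uniqueness} once to produce each $v_s$ and a second time to verify the partial-representation identities. Your write-up actually spells out more of the bookkeeping (extending by zero, identifying the relevant corners via axiom (ii) of a partial action) than the paper's terse proof does.
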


We will later apply the above lemma to the following construction:

\begin{defn}
	Let $\theta$ be a partial action of an inverse semigroup $S$ on a $C^*$-algebra $A$. We extend $\theta$ to a partial action $\theta^{**}$ on the enveloping von Neumann algebra $M:=A^{**}$ as follows. For $s\in S$, let $M_s:=(A_s)^{**}$ be the enveloping von Neumann algebra of $A_s$, considered as an ultraweakly closed ideal in $M$. Define $\theta_s^{**}:A_{s^*}^{**}\to A_s^{**}$ as the normal extension of $\theta_s$. 
\end{defn}

By uniqueness of the various normal extensions, $\theta^{**}$ is indeed a partial action.

\section{Proof of the main theorem}

The following application of Stinespring's theorem can be proved exactly as \cite[Lemma 4.8]{buss2019injectivity}. 
\begin{lem}
	Let $\theta$ be a partial action of an inverse semigroup $S$ on a $C^*$-algebra $A$. Let $(\phi,v)$ be a completely positive  covariant representation of $A$ on a Hilbert space $H$ (i.e. the same conditions as in Definition \ref{covariantrep} hold with "$*$-homomorphism" replaced by "completely positive map"). Then the map 
		\[\tilde \phi:C_c(S,A)\to \mathcal B(H),\quad a u_s\mapsto \phi(a)v_s\]
	extends to a completely positive map $\tilde \phi:A\rtimes S\to \mathcal B(H)$. If $\phi$ is contractive, then so is $\tilde \phi$.
\end{lem}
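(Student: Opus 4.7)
The plan is to apply Stinespring's dilation theorem to $\phi$ in order to reduce the problem to the case of a $*$-homomorphism covariant representation, where the existence of the integrated form is guaranteed by the universal property of $A\rtimes S$. Concretely, Stinespring provides a Hilbert space $K$, a non-degenerate $*$-representation $\pi:A\to \mathcal B(K)$ and a bounded operator $V:H\to K$ such that $\phi(a)=V^*\pi(a)V$ for all $a\in A$ and $K=\overline{\pi(A)VH}$; if $\phi$ is contractive, we may take $V$ to be an isometry. The strategy is then to dilate $v$ to a partial representation $w:S\to \mathcal B(K)$ so that $(\pi,w)$ is a covariant representation in the sense of Definition \ref{covariantrep}, and finally to set $\tilde\phi:=V^*(\pi\rtimes w)(\cdot)V$.

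The main technical step is the construction of $w$. For each $s\in S$ I would define $w_s$ on the dense subspace $\pi(A_{s^*})VH\subseteq K$ by the formula
\[w_s\bigl(\pi(a)V\xi\bigr):=\pi(\theta_s(a))Vv_s\xi,\qquad a\in A_{s^*},\ \xi\in H.\]
A direct calculation, expanding $\bigl\|\sum_i\pi(\theta_s(a_i))Vv_s\xi_i\bigr\|^2$ via the Stinespring identity $V^*\pi(\cdot)V=\phi(\cdot)$ and then applying the covariance relations $\phi(\theta_s(b))=v_s\phi(b)v_{s^*}$ and $v_{s^*}v_s\phi(b)=\phi(b)$ valid for $b\in A_{s^*}$, shows that $w_s$ preserves inner products on its domain. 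It therefore extends uniquely to a partial isometry on $K$ with initial space $\overline{\pi(A_{s^*})VH}=\overline{\pi(A_{s^*})K}$. The remaining partial representation relations $w_{st}=w_sw_t$ on the correct initial subspace and $w_s^*=w_{s^*}$, together with the covariance conditions $\pi(\theta_s(a))=w_s\pi(a)w_{s^*}$ and $\pi(A_s)K=w_{ss^*}K$, follow directly from the defining formula, the minimality of the Stinespring dilation, and the analogous properties of $v$.

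Once $(\pi,w)$ is an honest covariant representation, Proposition \ref{covariantrepresentationprop} produces an integrated $*$-homomorphism $\pi\rtimes w:A\rtimes S\to \mathcal B(K)$, and the map $\tilde\phi:=V^*(\pi\rtimes w)(\cdot)V$ is automatically completely positive (being a compression of a $*$-homomorphism) and contractive whenever $V$ is an isometry. The identity $\tilde\phi(au_s)=\phi(a)v_s$ for $a\in A_s$ is then verified by writing $a=\theta_s(b)$ with $b\in A_{s^*}$, invoking covariance of $(\pi,w)$ to replace $\pi(a)w_s$ by $w_s\pi(b)w_{s^*}w_s$, using that $\pi(b)w_{s^*}w_s=\pi(b)$ (since $\pi(b)$ maps $K$ into the range of the projection $w_{s^*}w_s$), and finally applying the defining formula of $w_s$ to $\pi(b)V\xi$. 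The main obstacle will be the isometric extension of $w_s$: this is the sole point at which the two covariance hypotheses on $(\phi,v)$ enter essentially, and carrying it out requires carefully interleaving the Stinespring identity with those hypotheses so that the partial-isometry projections $v_{s^*}v_s$ cancel correctly. Once this estimate is in hand, all remaining verifications are formal and can be performed exactly as in \cite[Lemma 4.8]{buss2019injectivity}.
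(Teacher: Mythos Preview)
Your proposal is correct and follows essentially the same approach as the paper: the paper itself gives no proof beyond the sentence ``can be proved exactly as \cite[Lemma 4.8]{buss2019injectivity}'', and that lemma is precisely the Stinespring dilation argument you have sketched. In fact you have supplied more detail than the paper does, including the explicit formula for the dilated partial representation $w_s$ and the verification that $\tilde\phi(au_s)=\phi(a)v_s$.
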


\begin{cor}\label{completelypositive}
	Let $A$ and $B$ be $C^*$-algebras equipped with partial actions of an inverse semigroup $S$. Let $\phi:A\to B$ be an equivariant completely positive map. Then the map 
		\[\tilde \phi:C_c(S,A)\to C_c(S,B),\quad a u_s\mapsto \phi(b)u_s\]
	extends to a completely positive map $A\rtimes S\to B\rtimes S$. If $\phi$ is contractive, then so is $\tilde \phi$.
	\begin{proof}
		Take a non-degenerate and faithful representation $B\rtimes S\subseteq \mathcal B(H)$. By Proposition \ref{covariantrepresentationprop}, any such representation is the integrated form of a covariant representation $(\pi,v)$. Now apply the above lemma to the pair $(\pi\circ \phi,v)$. 
	\end{proof}
\end{cor}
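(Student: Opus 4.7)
The plan is to reduce to the preceding lemma by pulling the canonical covariant representation of $B\rtimes S$ back along $\phi$. First I fix a faithful non-degenerate embedding $B\rtimes S\subseteq \mathcal B(H)$; by Proposition \ref{covariantrepresentationprop}, this arises as the integrated form of a covariant representation $(\pi,v)$ of $(B,S)$, so that $bu_s\in B\rtimes S$ acts as $\pi(b)v_s$ on $H$ for $b\in B_s$.

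Next I consider the composition $\pi\circ \phi:A\to \mathcal B(H)$, which is completely positive (and contractive if $\phi$ is) because $\phi$ is completely positive and $\pi$ is a $*$-homomorphism, and I verify that $(\pi\circ\phi,v)$ is a completely positive covariant representation of $(A,S)$. Equivariance of $\phi$ yields $\phi(A_s)\subseteq B_s$, hence $(\pi\circ\phi)(A_s)H\subseteq \pi(B_s)H=v_{ss^*}H$; it also gives $\phi(\theta_s(a))=\theta_s(\phi(a))$ for $a\in A_{s^*}$, so that $(\pi\circ\phi)(\theta_s(a))=\pi(\theta_s(\phi(a)))=v_s\pi(\phi(a))v_{s^*}$ by covariance of $(\pi,v)$. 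These are precisely the hypotheses needed to invoke the previous lemma.

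Applying that lemma then produces a completely positive (respectively contractive) map $A\rtimes S\to \mathcal B(H)$ sending $au_s$ to $\pi(\phi(a))v_s$. Since $\pi(\phi(a))v_s$ is the image of $\phi(a)u_s\in B\rtimes S$ under the chosen embedding $B\rtimes S\subseteq \mathcal B(H)$, the image of $C_c(S,A)$ already lies in $B\rtimes S$; by continuity this extends to the desired completely positive map $\tilde\phi:A\rtimes S\to B\rtimes S$. The only mildly technical point is the verification of the first covariance axiom for $(\pi\circ\phi,v)$, which as indicated above is an immediate consequence of the fact that $\phi$ maps $A_s$ into $B_s$.
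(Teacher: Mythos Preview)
Your proof is correct and follows essentially the same approach as the paper: faithfully and non-degenerately represent $B\rtimes S$ on a Hilbert space, disintegrate via Proposition~\ref{covariantrepresentationprop} into a covariant pair $(\pi,v)$, and then apply the preceding lemma to $(\pi\circ\phi,v)$. You have simply spelled out the verification of the covariance conditions and the observation that the resulting map lands in $B\rtimes S$, which the paper leaves implicit.
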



\begin{lem}[{\cite[Proposition 1.5.7]{brown2008textrm}}]\label{multiplicativedomain}
	Let $\phi:A\to B$ be a completely positive contractive map. Then there is a largest subalgebra $A_\phi\subseteq A$ such that $\phi|_{A_\phi}$ is a $*$-homomorphism. Furthermore, we have 
		\[\phi(ab)=\phi(a)\phi(b),\quad \phi(ba)=\phi(b)\phi(a),\quad \forall a\in A, b\in A_\phi.\]
\end{lem}
The subalgebra $A_\phi\subseteq A$ is called the \emph{multiplicative domain} of $A_\phi$.

\begin{prop}\label{Arveson-trick}
	Let $\mathcal G$ be an \'etale groupoid with unit space $X$. Denote by $S$ its inverse semigroup of open bisections. Suppose that $C^*\mathcal G=C^*_r\mathcal G$. Then there is an $S$-equivariant completely positive contractive map
		\[C_0(\beta_r \mathcal G)\to C_0(X)^{**}\]
	which extends the inclusion on $C_0(X)$. 
	\begin{proof}
		Let $\tilde \pi:C_0(X)^{**}\hookrightarrow \mathcal B(H)$ be the standard form of $C_0(X)^{**}$ and $(\tilde \pi,v)$ the associated covariant representation as in Lemma \ref{covarianthaagerup}. Denote by $\pi$ the restriction of $\tilde \pi$ to $C_0(X)$. Then $(\pi,v)$ integrates to a representation
			\[\pi\rtimes v:C^*_r\mathcal G=C^*\mathcal G=C_0(X)\rtimes S\to \mathcal B(H).\]
		Recall that reduced crossed products preserve inclusions. The proof of this fact in \cite[Lemma A.16]{echterhoff2002categorical} for groups can easily be adapted to groupoids. Thus, the inclusion $\iota:C_0(X)\to C_0(\beta_r \mathcal G)$ induces an inclusion
					\[\iota\rtimes_r \mathcal G:C^*_r \mathcal G\hookrightarrow C_0(\beta_r \mathcal G)\rtimes_r \mathcal G.\]
		By Arveson's extension theorem, there is a completely positive contractive map $\tilde \phi$ such that the following diagram commutes.
		\[\begin{tikzcd}
			C_0(\beta_r\mathcal G)\rtimes_r \mathcal G\arrow[dr,"\tilde \phi",dashed]&\\
			C^*_r\mathcal G\arrow[u,"\iota\rtimes_r \mathcal G",hook]\arrow[r,"\pi\rtimes v"] &\mathcal B(H)
		\end{tikzcd}\]		
		       Since $C_0(\beta_r \mathcal G)\subseteq C_0(\beta_r \mathcal G)\rtimes_r \mathcal G$ is commutative, it follows from Lemma \ref{multiplicativedomain} that $\tilde \phi(C_0(\beta_r \mathcal G))$ is contained in the commutant $ \pi(C_0(X))'\cong C_0(X)^{**}$. We claim that the restriction $\phi$ of $\tilde \phi$ to $C_0(\beta_r \mathcal G)$ is $S$-equivariant. Again by Lemma \ref{multiplicativedomain}, $\phi$ is $C_0(X)$-linear and thus preserves the domains of the partial actions by $S$.
		       
		        Now denote the canonical partial action of $S$ on $C_0(\beta_r \mathcal G)$ as well as its restriction to $C_0(X)$ by $\theta$. To see that $\phi$ is equivariant, fix elements $s\in S$ and $a\in C_0(\beta_r \mathcal G)_{s^*}$. We have to show that 
		        	\begin{equation}\label{phiisequivariant}
		        		\phi(\theta_s(a))=\theta_s^{**}(\phi(a))
		        	\end{equation}
		        holds. We identify $a$ with its image $a u_{s^*s}$ in the crossed product $ C_0(\beta_r\mathcal G)\rtimes S$ as in Remark \ref{inclusion-into-crossed-product}. Similarly, we identify $\theta_s(a)$ with $\theta_s(a)u_{ss^*}$ . Fix an approximate identity $(x_\lambda)_\lambda$ for $C_0(X)_{s}$. Since the inclusion $C_0(X)_{s}\to C_0(\beta_r\mathcal G)_{s}$ is non-degenerate, the image of $(x_\lambda)_\lambda$ in $C_0(\beta_r \mathcal G)_{s}$ is again an approximate identity. We denote it by $(x_\lambda)_\lambda$ as well. 
		        Similarly, $(\theta_{s^*}(x_\lambda))_\lambda$ is an approximate identity for $C_0(\beta_r\mathcal G)_{s^*}$. A calculation in the crossed product $ C_0(\beta_r\mathcal G)\rtimes S$ shows that we have 
		        	\begin{equation}\label{crossedproductcalc}
		        		x_\lambda u_s a \theta_{s^*}(x_\lambda)u_{s^*}=x_\lambda \theta_s(a)x_\lambda u_{ss^*}.
		        	\end{equation}
				Now reinterpret $\tilde \phi$ as a map on $C_0(\beta_r\mathcal G)\rtimes S$ by precomposing it with the quotient map 
					\[C_0(\beta_r\mathcal G)\rtimes S\cong C_0(\beta_r\mathcal G)\rtimes\mathcal G\to C_0(\beta_r\mathcal G)\rtimes_r\mathcal G.\]		        
		        Since the elements $x_\lambda u_s$ and $\theta_{s^*}(x_\lambda)u_{s^*}$ belong to the multiplicative domain of $\tilde \phi$ and since $\tilde \phi$ extends $\pi\rtimes v$, we obtain 
		        	\begin{align*}
		        		&\phi(\theta_s(a))=\tilde\phi(\theta_s(a)u_{ss^*})=\lim \tilde \phi(x_\lambda \theta_s(a)x_\lambda u_{ss^*})\\
		        		\overset{\eqref{crossedproductcalc}}=&\lim \tilde\phi(x_\lambda u_s a \theta_{s^*}(x_\lambda)u_{s^*})=\lim \pi(x_\lambda)v_s \phi(a)\pi(\theta_{s^*}(x_\lambda))v_{s^*}=v_s\phi(a)v_{s^*}\\
		        		=&\theta_s^{**}(\phi(a)).
		        	\end{align*}
		        This proves \eqref{phiisequivariant}.
	\end{proof}
\end{prop}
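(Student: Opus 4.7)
The plan is to construct $\phi$ as a restriction of a c.p.\ Stinespring-type dilation of the regular representation of $C^*_r\mathcal G$ on the Haagerup standard form of $C_0(X)^{**}$.

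First, I apply Lemma \ref{covarianthaagerup} to the partial action $\theta^{**}$ of $S$ on the (commutative) von Neumann algebra $C_0(X)^{**}$. The standard form embedding $\tilde\pi:C_0(X)^{**}\hookrightarrow\mathcal B(H)$ together with the partial representation $v:S\to\mathcal B(H)$ provided by the lemma form a covariant representation of $\theta^{**}$. Restricting $\tilde\pi$ to $C_0(X)$ yields a covariant representation $(\pi,v)$ of $\theta$, and its integrated form is a representation $\pi\rtimes v:C_0(X)\rtimes S\to\mathcal B(H)$. Identifying $C_0(X)\rtimes S\cong C^*\mathcal G$ and invoking the hypothesis $C^*\mathcal G=C^*_r\mathcal G$, I regard $\pi\rtimes v$ as a representation of $C^*_r\mathcal G$.

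Second, the canonical $\mathcal G$-equivariant inclusion $\iota:C_0(X)\hookrightarrow C_0(\beta_r\mathcal G)$ induces an inclusion of reduced crossed products $\iota\rtimes_r\mathcal G:C^*_r\mathcal G\hookrightarrow C_0(\beta_r\mathcal G)\rtimes_r\mathcal G$, and Arveson's extension theorem supplies a c.p.c.\ map
\[\tilde\phi:C_0(\beta_r\mathcal G)\rtimes_r\mathcal G\to\mathcal B(H)\]
extending $\pi\rtimes v$ along this inclusion. I set $\phi:=\tilde\phi|_{C_0(\beta_r\mathcal G)}$. Because $\tilde\phi$ restricts to the $*$-homomorphism $\pi\rtimes v$ on the image of $C^*_r\mathcal G$, the whole subalgebra $\iota\rtimes_r\mathcal G(C^*_r\mathcal G)$ lies in the multiplicative domain of $\tilde\phi$. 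A standard Choi-inequality argument, combined with commutativity of $C_0(\beta_r\mathcal G)$, then shows $\phi(C_0(\beta_r\mathcal G))\subseteq\pi(C_0(X))'$. Since the standard form of a commutative von Neumann algebra is a MASA, $\pi(C_0(X))'=\tilde\pi(C_0(X)^{**})\cong C_0(X)^{**}$, so $\phi$ indeed takes values in $C_0(X)^{**}$; the multiplicative domain bimodularity also makes $\phi$ a $C_0(X)$-module map, so $\phi(C_0(\beta_r\mathcal G)_{s^*})\subseteq C_0(X)^{**}_{s^*}$ for every $s\in S$.

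The main obstacle is upgrading $C_0(X)$-linearity to full $S$-equivariance. For $s\in S$ and $a\in C_0(\beta_r\mathcal G)_{s^*}$, I choose an approximate unit $(x_\lambda)$ for $C_0(X)_s$; by nondegeneracy of $\iota$ (which is built into the construction of $C_0(\beta_r\mathcal G)$) this is also an approximate unit for $C_0(\beta_r\mathcal G)_s$, and $(\theta_{s^*}(x_\lambda))$ is one for $C_0(\beta_r\mathcal G)_{s^*}$. Identifying $a$ with $au_{s^*s}$ and $\theta_s(a)$ with $\theta_s(a)u_{ss^*}$ as in Remark \ref{inclusion-into-crossed-product}, a direct computation in $C_0(\beta_r\mathcal G)\rtimes S$ gives
\[(x_\lambda u_s)\cdot(au_{s^*s})\cdot(\theta_{s^*}(x_\lambda)u_{s^*})=x_\lambda\theta_s(a)x_\lambda u_{ss^*}.\]
The flanking elements $x_\lambda u_s$ and $\theta_{s^*}(x_\lambda)u_{s^*}$ lie in $\iota\rtimes_r\mathcal G(C^*_r\mathcal G)$ and hence in the multiplicative domain of $\tilde\phi$, where they evaluate to $\pi(x_\lambda)v_s$ and $v_{s^*}\pi(\theta_{s^*}(x_\lambda))$. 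Factoring them out of $\tilde\phi$ and passing to the limit in $\lambda$ yields $\phi(\theta_s(a))=v_s\phi(a)v_{s^*}=\theta_s^{**}(\phi(a))$, which completes the equivariance check.
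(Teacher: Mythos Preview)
Your proposal is correct and follows essentially the same route as the paper's proof: standard form of $C_0(X)^{**}$ plus Lemma~\ref{covarianthaagerup} to get a covariant pair $(\pi,v)$, Arveson extension of $\pi\rtimes v$ along $\iota\rtimes_r\mathcal G$, multiplicative-domain arguments to land in $C_0(X)^{**}$ and get $C_0(X)$-linearity, and finally the approximate-unit/flanking computation $(x_\lambda u_s)\,a\,(\theta_{s^*}(x_\lambda)u_{s^*})=x_\lambda\theta_s(a)x_\lambda u_{ss^*}$ to obtain $S$-equivariance. The only cosmetic slip is that $\tilde\phi(\theta_{s^*}(x_\lambda)u_{s^*})=\pi(\theta_{s^*}(x_\lambda))v_{s^*}$ rather than $v_{s^*}\pi(\theta_{s^*}(x_\lambda))$, but this does not affect the argument.
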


\begin{lem}\label{map-on-double-dual}
	Let $\theta$ be a partial action of an inverse semigroup $S$ on a $C^*$-algebra $A$. Then there is a $*$-homomorphism
		\[A^{**}\rtimes S\to (A\rtimes S)^{**}\]
	such that the composition 
		\[A\rtimes S\to A^{**}\rtimes S\to (A\rtimes S)^{**}\]
	is the canonical inclusion.
	\begin{proof}
		Represent $(A\rtimes S)^{**}\subseteq \mathcal B(H)$ faithfully, normally and non-degenerately on a Hilbert space $H$. The restriction of this inclusion to $A\rtimes S$ is an integrated form of a covariant representation $(\pi,v)$ by Proposition \ref{covariantrepresentationprop}. Denote by $\pi^{**}$ the unique normal extension of $\pi$ to $A^{**}$. We claim that $(\pi^{**},v)$ is again a covariant representation whose integrated form $\pi^{**}\rtimes v$ maps into $(A\rtimes S)^{**}$. Indeed, using normality of $\pi^{**}$ we get 
			\[\pi^{**}(A_s^{**})H=\pi(A_s)''H=\overline{\pi(A_s)H}=H_s\]
		for all $s\in S$. For an element $a\in A_s^{**}$ which is the ultraweak limit of a net $a_\lambda\in A_s$, we have
			\[\pi^{**}(\theta^{**}_s (a))=\lim \pi(\theta_s(a_\lambda))=\lim v_s \pi(a_\lambda)v_s^{*}= v_s\pi^{**}(a)v_s^*,\]
			again using normality of $\pi^{**}$ and $\theta^{**}_s$. Observe that we also have 
				\[\pi^{**}\rtimes v(a u_s)=\lim \pi(a_\lambda)v_s\in (A\rtimes S)^{**}.\]
			Thus the map $\pi^{**}\rtimes v$ has the desired properties. 
		
	\end{proof}
\end{lem}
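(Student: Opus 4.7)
The plan is to construct the desired $*$-homomorphism as the integrated form of a covariant representation of $(A^{**},S,\theta^{**})$ built out of a carefully chosen faithful representation of $A\rtimes S$, with Proposition \ref{covariantrepresentationprop} as the main tool.

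First I would fix a faithful, normal, non-degenerate representation $(A\rtimes S)^{**}\subseteq \mathcal B(H)$, for instance via the standard form. Restricting to $A\rtimes S$ still yields a non-degenerate $*$-representation, so by Proposition \ref{covariantrepresentationprop} it is the integrated form of some covariant representation $(\pi,v)$ of $(A,S,\theta)$. The $*$-homomorphism $\pi$ admits a unique normal extension $\pi^{**}:A^{**}\to \mathcal B(H)$, while the partial isometries $v_s$ are kept unchanged. Next I would verify that $(\pi^{**},v)$ is a covariant representation of $(A^{**},S,\theta^{**})$. The range condition $\pi^{**}(A_s^{**})H=v_{ss^*}H$ reduces to the corresponding statement for $(\pi,v)$ by the ultraweak density of $A_s$ in $A_s^{**}$ together with normality of $\pi^{**}$: the subspace $\pi^{**}(A_s^{**})H$ coincides with the norm-closure of $\pi(A_s)H$, and the latter equals $v_{ss^*}H$ by covariance of $(\pi,v)$. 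The covariance identity $\pi^{**}(\theta_s^{**}(a))=v_s\pi^{**}(a)v_{s^*}$ on $A_{s^*}^{**}$ follows because both sides are normal in $a$ and agree on the ultraweakly dense subalgebra $A_{s^*}$.

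Having $(\pi^{**},v)$ in hand, I would form the integrated form $\pi^{**}\rtimes v:A^{**}\rtimes S\to \mathcal B(H)$ and argue that its image actually lies inside $(A\rtimes S)^{**}\subseteq \mathcal B(H)$. Writing $a\in A_s^{**}$ as an ultraweak limit $a=\lim_\lambda a_\lambda$ with $a_\lambda\in A_s$, normality of $\pi^{**}$ gives
\[\pi^{**}(a)v_s=\lim_\lambda \pi(a_\lambda)v_s=\lim_\lambda (\pi\rtimes v)(a_\lambda u_s)\]
ultraweakly. Each term on the right belongs to $A\rtimes S\subseteq (A\rtimes S)^{**}$, and the latter is ultraweakly closed, so the limit lies in $(A\rtimes S)^{**}$. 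Since the elements $au_s$ span $A^{**}\rtimes S$, this shows $\pi^{**}\rtimes v$ factors through $(A\rtimes S)^{**}$, and by construction its restriction to $A\rtimes S$ is the canonical inclusion.

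The main obstacle is precisely this last factorisation step, not the construction of $\pi^{**}\rtimes v$ into $\mathcal B(H)$. What makes it delicate is that the partial isometries $v_s$ are a priori just operators on $H$ and need not belong to $A\rtimes S$; only the products $\pi(a_\lambda)v_s$ with $a_\lambda\in A_s$ lie there. The construction works because the choice of representation $(A\rtimes S)^{**}\subseteq \mathcal B(H)$ lets us recognise these products inside the enveloping von Neumann algebra, and the normality of $\pi^{**}$ propagates the ultraweak approximation $a_\lambda\to a$ through to an ultraweak approximation $\pi(a_\lambda)v_s\to \pi^{**}(a)v_s$ inside $(A\rtimes S)^{**}$.
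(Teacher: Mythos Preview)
Your proposal is correct and follows essentially the same approach as the paper: represent $(A\rtimes S)^{**}$ faithfully and normally on a Hilbert space, disintegrate the restriction to $A\rtimes S$ via Proposition~\ref{covariantrepresentationprop}, extend $\pi$ normally to $\pi^{**}$, and verify that $(\pi^{**},v)$ is covariant with integrated form landing in $(A\rtimes S)^{**}$ by ultraweak approximation. Your write-up is slightly more careful about the domain in the covariance relation (using $A_{s^*}^{**}$ rather than $A_s^{**}$), and your closing paragraph identifying the factorisation through $(A\rtimes S)^{**}$ as the genuinely nontrivial step is a useful observation.
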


We can now prove our main theorem:
\begin{thm}
	Let $\mathcal G$ be an \'etale groupoid which is strongly amenable at infinity. If $C^*_r\mathcal G=C^*\mathcal G$, then $C^*_r\mathcal G$ is nuclear. In particular $\mathcal G$ is amenable.

\end{thm}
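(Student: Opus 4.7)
The plan is to factor the canonical inclusion $\iota_0:C^*_r\mathcal G\hookrightarrow (C^*_r\mathcal G)^{**}$ through the $C^*$-algebra $N:=C_0(\beta_r\mathcal G)\rtimes_r\mathcal G$ via completely positive contractive (cpc) maps, following the blueprint of Matsumura \cite{matsumura2014characterization}. Since $\mathcal G$ is strongly amenable at infinity, the action of $\mathcal G$ on $\beta_r\mathcal G$ is amenable, so $C_0(\beta_r\mathcal G)\rtimes\mathcal G = N$ and $N$ is nuclear. Once such a cpc factorization of $\iota_0$ through a nuclear $C^*$-algebra is in hand, nuclearity of $C^*_r\mathcal G$ will follow by the standard argument of \cite{matsumura2014characterization}.

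To construct the factorization, let $S$ denote the inverse semigroup of open bisections of $\mathcal G$. First, Proposition \ref{Arveson-trick} supplies an $S$-equivariant cpc map $\phi:C_0(\beta_r\mathcal G)\to C_0(X)^{**}$ extending the canonical inclusion on $C_0(X)$. Next, Corollary \ref{completelypositive} lifts $\phi$ to a cpc map $\tilde\phi:C_0(\beta_r\mathcal G)\rtimes S\to C_0(X)^{**}\rtimes S$. Composing $\tilde\phi$ with the $*$-homomorphism $C_0(X)^{**}\rtimes S\to (C_0(X)\rtimes S)^{**}$ from Lemma \ref{map-on-double-dual}, identifying $C_0(\beta_r\mathcal G)\rtimes S\cong C_0(\beta_r\mathcal G)\rtimes\mathcal G = N$ via Quigg's theorem, and using the hypothesis $C^*\mathcal G=C^*_r\mathcal G$, I obtain the desired cpc map
\[
\Phi:N\xrightarrow{\tilde\phi}C_0(X)^{**}\rtimes S\to (C_0(X)\rtimes S)^{**}=(C^*_r\mathcal G)^{**}.
\]

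The hard part — and the main piece of bookkeeping — will be to verify that the composition $C^*_r\mathcal G\hookrightarrow N\xrightarrow{\Phi}(C^*_r\mathcal G)^{**}$ really coincides with $\iota_0$, where the first arrow is induced by the inclusion $C_0(X)\hookrightarrow C_0(\beta_r\mathcal G)$ (reduced crossed products preserving inclusions, as recalled in the proof of Proposition \ref{Arveson-trick}). This should reduce to checking the claim on generators $fu_s$ with $f\in C_0(X)_s$: by construction $\phi$ restricts to the canonical inclusion $C_0(X)\hookrightarrow C_0(X)^{**}$, the map $\tilde\phi$ sends $u_s$ to $u_s$ by Corollary \ref{completelypositive}, and the map from Lemma \ref{map-on-double-dual} restricts to the canonical inclusion on $C_0(X)\rtimes S=C^*_r\mathcal G$. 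Once this diagram chase is completed, $C^*_r\mathcal G$ is nuclear, and the ``in particular'' statement follows from \cite[Corollary 6.2.14, Theorem 3.3.7]{anantharaman2001amenable}.
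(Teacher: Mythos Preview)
Your proposal is correct and follows essentially the same route as the paper: factor the inclusion $C^*_r\mathcal G\hookrightarrow (C^*_r\mathcal G)^{**}$ through the nuclear algebra $C_0(\beta_r\mathcal G)\rtimes S\cong C_0(\beta_r\mathcal G)\rtimes_r\mathcal G$ using Proposition~\ref{Arveson-trick}, Corollary~\ref{completelypositive}, and Lemma~\ref{map-on-double-dual}, then conclude nuclearity via the standard factorization argument (the paper cites \cite[Proposition 2.3.8]{brown2008textrm}). The diagram chase you flag as ``the hard part'' is exactly what the paper compresses into the sentence ``Putting things together, we can express the inclusion\ldots as the following composition,'' and your sketch of how to verify it on generators $fu_s$ is correct.
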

\begin{proof}
	Denote by $S$ the inverse semigroup of open bisections of $\mathcal G$. By Proposition \ref{Arveson-trick}, there is an $S$-equivariant completely positive contractive map 
		\[\phi:C_0(\beta_r \mathcal G)\to C_0(X)^{**}\]
	extending the inclusion of $C_0(X)$. By Corollary \ref{completelypositive}, $\phi$ extends to a completely positive contractive map 
		\[\tilde \phi:C_0(\beta_r \mathcal G)\rtimes S\to C_0(X)^{**}\rtimes S.\]
	By Lemma \ref{map-on-double-dual}, there is a $*$-homomorphism 
		\[C_0(X)^{**}\rtimes S\to (C_0(X)\rtimes S)^{**}\]
		extending the inclusion on $C_0(X)\rtimes S$. 
		 Putting things together, we can express the inclusion $C_0(X)\rtimes S\hookrightarrow (C_0(X)\rtimes S)^{**}$ as the following composition of completely positive contractive maps:
		\begin{equation}\label{bigcomposition}
			C_0(X)\rtimes S\to C_0(\beta_r \mathcal G)\rtimes S\to C_0(X)^{**}\rtimes S\to (C_0(X)\rtimes S)^{**}.
		\end{equation}
	Since $\mathcal G$ was assumed to be strongly amenable at infinity, the $C^*$-algebra $C_0(\beta_r \mathcal G)\rtimes S\cong C_0(\beta_r \mathcal G)\rtimes \mathcal G\cong C_0(\beta_r \mathcal G)\rtimes_r \mathcal G$ is nuclear \cite[Proposition 7.2]{anantharaman2016exact}. Thus the map \eqref{bigcomposition} is nuclear. Recall that by \cite[Proposition 2.3.8]{brown2008textrm}, a $C^*$-algebra is nuclear if and only if its inclusion into its double dual is nuclear. Therefore $C_0(X)\rtimes S\cong C^*\mathcal G\cong C^*_r \mathcal G$ is nuclear. Now amenability of $\mathcal G$ follows from \cite[Corollary 6.2.14, Theorem 3.3.7]{anantharaman2001amenable}.
\end{proof}

\subsection*{Acknowledgements}
The author would like to thank Siegfried Echterhoff for several discussions and for introducing him to the topic of amenable actions. This work was funded by the Deutsche Forschungsgemeinschaft (DFG, German Research Foundation) – Project-ID 427320536 – SFB 1442, as well as under Germany’s Excellence Strategy EXC 2044 390685587, Mathematics Münster: Dynamics–Geometry–Structure.

\bibliography{JOT-submission}{}
	\bibliographystyle{abbrv}
\end{document}